\newtheorem{explicitgraham}{Theorem}
\newtheorem{actualsninftybound}[explicitgraham]{Theorem}
\newtheorem{finalbound}[explicitgraham]{Theorem}
\newtheorem{primesarehard}[explicitgraham]{Theorem}
\newtheorem{cr1}{Lemma}
\newtheorem{cr2}[cr1]{Lemma}
\newtheorem{sninftybound}[cr1]{Lemma}
\newtheorem{sninftylemma}[cr1]{Lemma}
\newtheorem{tenenbaumyokota}[cr1]{Lemma}
\newtheorem{ronald}[cr1]{Lemma}
\newtheorem{arbsmall}[cr1]{Lemma}
\newtheorem{halfandone}[cr1]{Lemma}
\newtheorem{allhalfs}[cr1]{Lemma}
\newtheorem{completesequences}[cr1]{Lemma}
\newtheorem{sumofstuff}[cr1]{Lemma}
\newtheorem{smoothinarith}[cr1]{Lemma}
\newtheorem{existenceofd}[cr1]{Lemma}
\newcommand{\mfree}{$M$-free\xspace}
\newcommand{\mlarge}{$m$-large\xspace}
\newcommand{\free}[1]{#1-free\xspace}
\newcommand{\nalpham}{n_{\alpha, m}\xspace}
\newcommand{\nalphaone}{n_{\alpha, 1}\xspace}
\newcommand{\nalpha}{n_{\alpha}\xspace}
\newcommand{\n}[1]{n_{#1}\xspace}
\newcommand{\setA}[1]{A(#1)\xspace}
\newcommand{\setB}[1]{B(#1)\xspace}
\begin{document}
\vspace*{-2cm}
\Large
 \begin{center}
Partitions with prescribed sum of reciprocals: \\ 
asymptotic bounds

\hspace{10pt}

\large
Wouter van Doorn \\

\hspace{10pt}

\end{center}

\hspace{10pt}

\normalsize

\vspace{-10pt}

\centerline{\bf Abstract}

\noindent
In \mbox{\cite{gr}} Graham proved that every positive integer $n \ge 78$ can be written as a sum of distinct positive integers $a_1, a_2, \ldots, a_r$ for which $\frac{1}{a_1} + \frac{1}{a_2} + \ldots + \frac{1}{a_r}$ is equal to $1$. In the same paper he managed to further generalize this, and showed that for all positive rationals $\alpha$ and all positive integers $m$, there exists an $\nalpham$ such that every positive integer $n \ge \nalpham$ has a partition with distinct parts, all larger than or equal to $m$, and such that the sum of reciprocals is equal to $\alpha$. No attempt was made to estimate the quantity $\nalpham$, however. With $\nalpha := \nalphaone$, in this paper we provide near-optimal upper bounds on $\nalpha$ and $\nalpham$, as well as bounds on the cardinality of the set $\{\alpha : \nalpha \le n\}$.

\section{Introduction and overview of results}
Given a set of positive integers $\{a_1, \ldots, a_r\}$, we say that this is an $\alpha$-partition of $n$, if all $a_i$ are distinct, $a_1 + \ldots + a_r = n$, and $\frac{1}{a_1} + \ldots + \frac{1}{a_r} = \alpha$. For positive integers $m$ and $M$, we say that a partition $A$ is \mlarge if all elements in $A$ are larger than or equal to $m$, and we call $A$ \mfree if none of the elements in $A$ are divisible by $M$. We furthermore define $\nalpham$ (which we shorten to $\nalpha$ if $m = 1$) to be the smallest positive integer such that, for all $n \ge \nalpham$, an \mlarge $\alpha$-partition of $n$ exists. Finally, define $\setB{n}$ as the set of all positive rationals $\alpha$ for which an $\alpha$-partition of $n$ exists, and define $\setA{n} \subseteq \setB{n}$ as the set of all $\alpha$ with $\nalpha \le n$. \\

In \mbox{\cite{gr}} Graham showed that $\n{1}$ is equal to $78$, and more generally that $\nalpha$ exists for all positive rationals $\alpha$. The proof, however, relied on a non-constructive (though beautifully general) result from \mbox{\cite{gr2}}. No bound on $\nalpha$ could therefore be extracted, and nothing beyond its existence seems to be known. To rectify this situation, in Section \ref{upperone} we will provide the first known upper bound on $\nalpha$. More specifically, we will show that for all $\epsilon > 0$ and all $\alpha = \frac{p}{q} \in (\epsilon, \epsilon^{-1})$ we have $\nalpha = o\left(q\log^3 q\right)$. We will moreover prove $\nalpha \neq o\left(q \log^2 q\right)$, so that we may conclude that our upper bound on $\nalpha$ is tight up to less than a single log-factor. In order to show this upper bound, we will rely on various results on unit fractions from the literature, most notably by Yokota \mbox{\cite{egypt1}}, Bloom \mbox{\cite{bloom}}, Liu and Sawhney \mbox{\cite{ls}}, and the author \mbox{\cite{wvd}}. \\

Moving on to $\nalpham$ for larger values of $m$, in Section \ref{uppertwo} we will make essential use of theorems by Croot \mbox{\cite{cr}} to prove the asymptotically optimal upper bound $\nalpham = \big(\frac{1}{2} + o_{\alpha}(1)\big)(e^{2\alpha}-1)m^2$, for fixed $\alpha > 0$. \\

Finally, we will look at the cardinality of the set $\setA{n}$. As mentioned, Graham \mbox{\cite{gr}} showed that the infinite union $\bigcup_{n \ge 1} \setA{n}$ is equal to the set of positive rationals, with $1 \in \setA{78}$. In \mbox{\cite{wvd}} the author showed that $\setA{65}$ is empty, while $\setA{100}$ contains exactly $4314$ rationals, and it is then natural to wonder about the growth rate of $|\setA{n}|$ as a function of $n$. In Section \ref{growth} we will combine results on \mfree partitions with known bounds on partition functions, in order to show $|\setA{n}| = e^{n^{1/2 + o(1)}}$. \\

As for our nomenclature and notation, we say that a positive integer $n$ is $x$-smooth if the largest prime divisor of $n$ is at most $x$, while $n$ is $x$-powersmooth if the largest prime power divisor of $n$ is at most $x$. Whenever we consider logarithms, we always interpret $\log(x) = 1$ if $x < e$. And as in \mbox{\cite{wvd}}, if $A = \{a_1, \ldots, a_r\}$ is a set of positive integers and $m \in \mathbb{N}$, then $\sum A$ denotes the sum $a_1 + \ldots + a_r$, $\sum A^{-1}$ is defined as the sum $\frac{1}{a_1} + \ldots + \frac{1}{a_r}$, and we write $mA$ for the set $\{ma_1, \ldots, ma_r\}$.

\section[First bound]{Bounds on $\nalpha$} \label{upperone}
The main goal of this section is to provide an upper bound on $\nalpha$ for any rational $\alpha > 0$. Afterwards we will show that this upper bound is less than a logarithmic factor away from optimal, if $\alpha$ is bounded away from $0$.

\begin{finalbound} \label{nalphabound}
For every $\epsilon > 0$ there exists a constant $c$ such that for all positive rationals $\alpha = \frac{p}{q}$ we have $\nalpha < \frac{c q \log^3 q}{\min(\alpha, 1) \log \log q} + c (e + \epsilon)^{2\alpha}$.
\end{finalbound}

In order to prove this theorem, we need some preliminary lemmata.

\begin{sninftylemma} \label{snifjes}
For a positive integer $M$, let $\alpha$, $\beta$ and $\gamma$ be positive rationals with $\alpha = \frac{\beta}{M} + \gamma$, and such that, for some $N \in \mathbb{N}$, \mfree $\gamma$-partitions exist for all integers larger than or equal to $N$. Then $\beta \in \setB{n_0}$ implies $\alpha \in \setA{Mn_0 + N}$. 
\end{sninftylemma}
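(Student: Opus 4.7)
The plan is to construct an explicit $\alpha$-partition of every $n \ge Mn_0 + N$ by gluing together two pieces: a scaled copy of a $\beta$-partition of $n_0$, which contributes only multiples of $M$, and an $M$-free $\gamma$-partition, which contributes only non-multiples of $M$. The disjointness of these two families is what makes the union legitimate as a partition with distinct parts.

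In more detail, I would first invoke $\beta \in \setB{n_0}$ to obtain a set $A = \{a_1, \ldots, a_r\}$ of distinct positive integers with $\sum A = n_0$ and $\sum A^{-1} = \beta$. Scaling by $M$ yields $MA = \{Ma_1, \ldots, Ma_r\}$, a set of distinct positive integers all divisible by $M$, satisfying $\sum MA = Mn_0$ and $\sum (MA)^{-1} = \beta/M$. Next, given any $n \ge Mn_0 + N$, set $n' := n - Mn_0$, so that $n' \ge N$. By hypothesis there exists an \mfree $\gamma$-partition $B$ of $n'$, that is, a set of distinct positive integers, none divisible by $M$, with $\sum B = n'$ and $\sum B^{-1} = \gamma$.

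Now consider $C := MA \cup B$. Because every element of $MA$ is divisible by $M$ while no element of $B$ is, the two sets are disjoint, and since $MA$ and $B$ are each made up of distinct integers, $C$ itself is a set of distinct positive integers. Then
\[
\sum C = \sum MA + \sum B = Mn_0 + n' = n, \qquad \sum C^{-1} = \frac{\beta}{M} + \gamma = \alpha,
\]
so $C$ is an $\alpha$-partition of $n$. Since $n \ge Mn_0 + N$ was arbitrary, this exhibits an $\alpha$-partition of every sufficiently large $n$ and gives $\nalpha \le Mn_0 + N$, i.e.\ $\alpha \in \setA{Mn_0 + N}$.

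There is no real obstacle here; the only subtle point is ensuring that $MA$ and $B$ have no common element, which is handled exactly by the \mfree hypothesis on $B$. The condition $M \ge 4$ is not used in the argument itself and presumably reflects the regime in which \mfree $\gamma$-partitions are guaranteed to exist elsewhere in the paper.
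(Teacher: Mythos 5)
Your proof is correct and essentially identical to the paper's: scale the $\beta$-partition of $n_0$ by $M$, union it with an \mfree $\gamma$-partition of $n - Mn_0 \ge N$, and use the \mfree hypothesis to guarantee disjointness. The only cosmetic difference is that the paper parametrizes by $n \ge N$ and produces a partition of $Mn_0 + n$, while you parametrize by $n \ge Mn_0 + N$ directly.
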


\begin{proof}
Let $n$ be any integer larger than or equal to $N$, and assume that $\beta$ is an element of $\setB{n_0}$ for some $n_0 \in \mathbb{N}$. We may then assume the existence of a $\beta$-partition $B$ of $n_0$ and an \mfree $\gamma$-partition $C$ of $n$. Then we claim that $A = MB \cup C$ is an $\alpha$-partition of $Mn_0 + n$. Indeed, the sets $MB$ and $C$ are disjoint as $C$ is \mfree, the sum $\sum A$ is equal to $\sum MB + \sum C = Mn_0 + n$, and $\sum A^{-1} = \sum (MB)^{-1} + \sum C^{-1} = \frac{\beta}{M} + \gamma = \alpha$. Since this works for all $n \ge N$, we obtain $\alpha \in \setA{Mn_0 + N}$, or $\nalpha \le Mn_0 + N$.
\end{proof}

\begin{tenenbaumyokota} \label{tenyo}
For every positive integer $M$ there exists a constant $c_1$ such that for every rational $\beta \in (0, M]$ with denominator at most $t$, a positive integer $n_0 < \frac{c_1 t \log^3 t}{\log \log t}$ exists for which $\beta \in \setB{n_0}$. 
\end{tenenbaumyokota}

\begin{proof}
Let $M \in \mathbb{N}$ be fixed, let $C$ be as in Theorem $3$ from \mbox{\cite{bloom}}, and let $N > e^{2M(M+1)}$ be a large enough integer such that $\frac{C \log \log \log N}{\log \log N} < \frac{1}{M+1}$. By applying Theorem II.$2$ from \mbox{\cite{egypt1}} to $\beta_0 := \beta - \left \lfloor \beta \right \rfloor \in [0, 1)$, we first obtain a $\beta_0$-partition $B_0$ with $\sum B_0 < \frac{c_2 t \log^3 t}{\log \log t}$, for some absolute constant $c_2$. This already finishes the proof if $\beta \le 1$ by taking $c_1 := c_2$, so we may assume $1 < \beta \le M$. In this case we set $c_1 := c_2 + N^2$ and define $A := \{n : n \le N \text{ and } n \notin B_0\}$. \\

We now partition $A$ into a disjoint union $A_1 \cup A_2 \cup \ldots \cup A_{ \left \lfloor \beta \right \rfloor}$ in the following way: add the next element from $A$ to the set $A_i$ for which $\sum A_i^{-1}$ is currently smallest. This greedy procedure makes it so that $\left|\sum A_i^{-1} - \sum A_j^{-1}\right| \le 1$ for all $i$ and $j$. In particular, by applying $\sum B_0^{-1} = \beta_0 \le \beta \le M$, $N > e^{2M(M+1)}$ and $\frac{C \log \log \log N}{\log \log N} < \frac{1}{M+1}$, we get the following lower bound on $\sum A_i^{-1}$ for all $i$:
\begin{align*}
\sum A_i^{-1} &\ge \frac{1}{\left \lfloor \beta \right \rfloor}\sum A^{-1} - 1 \\
&\ge \frac{1}{M}\left(\frac{1}{1} + \frac{1}{2} +\ldots + \frac{1}{N} - \beta_0 \right) - 1 \\
&> \frac{\log N}{M} - 2 \\
&> \frac{\log N}{M+1} \\
&> \frac{C \log N \log \log \log N}{\log \log N}
\end{align*}

It then follows from \mbox{\cite{bloom}} that subsets $B_i \subseteq A_i$ exist with $\sum B_i^{-1} = 1$ for all $i \ge 1$, and note that the sets $B_i$ (including $B_0$) are all pairwise disjoint. In particular, with $n_0 = \sum B_0 + \sum B_1 + \ldots + \sum B_{ \left \lfloor \beta \right \rfloor}$ we see that $B_0 \cup B_1 \cup \ldots \cup B_{\left \lfloor \beta \right \rfloor}$ is now a $\beta$-partition of $n_0$, while $n_0 \le \sum B_0 + \sum A < \frac{c_2 t \log^3 t}{\log \log t} + N^2 \le \frac{c_1 t \log^3 t}{\log \log t}$. 
\end{proof}

\begin{halfandone} \label{halfandone}
For every $n \ge 531$, a $1$-partition $C_1$ of $n$ exists such that all elements in $C_1$ are $3$-smooth. In particular, this $1$-partition is \free{$M$} for all positive integers $M$ with $M \equiv 0 \pmod{5}$.
\end{halfandone}

\begin{proof}
This is Proposition $1$ in \mbox{\cite{wvd}}.
\end{proof}

\begin{ronald} \label{graham}
For every $\epsilon > 0$ there exists a positive integer $M \equiv 0 \pmod{5}$ and a constant $c_3$, such that for all positive integers $\gamma$, an \mfree $(\gamma-1)$-partition $C_2$ exists with $\sum C_2 < c_3(e + \epsilon)^{2\gamma}$ and such that no elements in $C_2$ are $3$-smooth. 
\end{ronald}

\begin{proof}
Given $\epsilon > 0$, we may assume without loss of generality $\epsilon < \frac{1}{5}$. First, let $M$ be any multiple of $5$ with $M > \frac{32}{\epsilon}$. Secondly, let $N$ be a large enough integer so that the set of $3$-smooth integers in $[1, n]$ has cardinality less than $\frac{\epsilon n}{32} - 1$ for all $n \ge N$. Further assume that $N$ is also large enough so that for all positive integers $n \ge N$ and all sets $A \subseteq \{1, 2, \ldots, n\}$ with $|A| > (1 - \frac{1}{e} + \frac{\epsilon}{16})n$, a subset $B \subseteq A$ exists with $\sum B^{-1} = 1$. That such an $N$ exists is proven as Theorem $1.3$ in \mbox{\cite{ls}}. For a non-negative integer $i$ define $N_i := N(e + \epsilon)^i$, and let $A_i$ be the set of integers in the interval $[N_i, N_{i+1})$ that are not  $3$-smooth and not divisible by $M$. We then get the following bound on the size of $A_i$:
\begin{align*}
|A_i| &> (N_{i+1} - N_i - 1) - \left(\frac{\epsilon N_{i+1}}{32} - 1\right) - \frac{\epsilon N_{i+1}}{32} \\
&= \left(1 - \frac{1}{e + \epsilon} - \frac{\epsilon}{16}\right)N_{i+1} \\
&= \left(1 - \frac{1}{e} + \frac{\epsilon}{e(e + \epsilon)} - \frac{\epsilon}{16}\right)N_{i+1} \\
&> \left(1 - \frac{1}{e} + \frac{\epsilon}{16}\right)N_{i+1}
\end{align*}

Here in the last line we used the assumption $\epsilon < \frac{1}{5}$. By this lower bound on $|A_i|$ we in particular get from \mbox{\cite{ls}} that for every $i$ there exists a subset $B_i \subseteq A_i$ with $\sum B_i^{-1} = 1$, and note that these sets $B_i$ are all pairwise disjoint. We now define $C_2 := B_0 \cup B_1 \cup \ldots \cup B_{\gamma - 2}$, with $\sum C_2 < N_{\gamma - 1}^2 < c_3(e + \epsilon)^{2\gamma}$ by setting $c_3 := N^2$. By construction $C_2$ is indeed an \mfree $(\gamma-1)$-partition without any $3$-smooth elements.
\end{proof}

\begin{allhalfs} \label{allhalfs}
For every $\epsilon > 0$ there exists a positive integer $M$ and a constant $c_4$, such that with $\gamma$ any positive integer and $N = \left \lfloor c_4(e + \epsilon)^{2\gamma} \right \rfloor$, an \mfree $\gamma$-partition exists for all $n \ge N$.
\end{allhalfs}

\begin{proof}
For given $\epsilon > 0$ and $\gamma \in \mathbb{N}$, let $M$, $c_3$ and $C_2$ be as in Lemma \ref{graham}, and define $c_4 := 531 + c_3$ and $N := \left \lfloor c_4(e + \epsilon)^{2\gamma} \right \rfloor$. For any positive integer $n \ge N \ge 531 + \sum C_2$, let $C_1$ be a $1$-partition of $n - \sum C_2$ as in Lemma \ref{halfandone}. Since $C_1 \cap C_2 = \emptyset$ by construction of $C_1$ and $C_2$, while $M$ does not divide any of the elements in $C_1 \cup C_2$, we deduce that $C_1 \cup C_2$ is an \mfree $\gamma$-partition of $n$. 
\end{proof}

\begin{arbsmall} \label{arbsmall}
For every integer $k \ge 2$ there exists an $N < 106 \cdot 4^k$ such that a \free{$5$} $\frac{4}{3^{k}}$-partition $C$ of $n$ exists for every $n \ge N$.
\end{arbsmall}

\begin{proof}
This is (part of) Lemma $4$ in \mbox{\cite{wvd}}.
\end{proof}

We are now ready to prove Theorem \ref{nalphabound}.

\begin{proof}[Proof of Theorem \ref{nalphabound}]
Let $\epsilon > 0$ and $\alpha = \frac{p}{q} > 0$ be given, and let $M$ and $c_4$ be as in Lemma \ref{allhalfs}. First we assume $\alpha > 1$. \\

In this case, let $\gamma$ be the largest positive integer with $\gamma < \alpha$, and define $\beta := M(\alpha - \gamma)$. Since $\beta \in (0, M]$ we may apply Lemma \ref{tenyo} with $t \le q$ to deduce $\beta \in \setB{n_0}$ for some $n_0 < \frac{c_1q \log^3 q}{\log \log q}$. And now we apply Lemma \ref{snifjes} with $N$ as in Lemma \ref{allhalfs} to obtain $\alpha = \frac{\beta}{M} + \gamma \in \setA{Mn_0 + N}$ with $\nalpha \le Mn_0 + N < \frac{Mc_1q \log^3 q}{\log \log q} + c_4(e + \epsilon)^{2\alpha}$. Taking $c = \max(Mc_1, c_4)$ finishes the proof for $\alpha > 1$. \\

For $\alpha \le 1$, let $k \ge 2$ be such that $\frac{4}{3^{k}} < \alpha \le \frac{4}{3^{k-1}}$ and choose $\gamma := \frac{4}{3^{k}}$. We then set $\beta := 5(\alpha - \gamma) \in (0, 5]$ with denominator $t$, apply Lemma \ref{snifjes} with $M = 5$ and $N$ as in Lemma \ref{arbsmall}, to obtain the bound $\nalpha < \frac{5c_1t \log^3 t}{\log \log t} + N$. It therefore remains to estimate this latter quantity, starting with $N$. 
\begin{align*}
N &< 106 \cdot 4^k \\
&= 106 \cdot 12^{\log 4 / \hspace{-1pt} \log 3} \cdot \left(\frac{3^{k-1}}{4} \right)^{\log 4 / \hspace{-1pt} \log 3} \\
&< \frac{2500}{\alpha^{\log 4 / \hspace{-1pt} \log 3}} \\
&= \frac{2500}{\alpha} \cdot \left(\frac{q}{p}\right)^{\log 4 / \hspace{-1pt} \log 3 - 1} \\
&< \frac{2500q}{\alpha}
\end{align*}

Now recall that $t$ is the denominator of $\beta = 5(\alpha - \gamma)$. And since $\gamma = \frac{4}{3^{k}} < \alpha \le \frac{4}{3^{k-1}}$, we get $t \le 3^kq \le \frac{12q}{\alpha} \le 12q^2$, so that $\log^3 t \le \log^3 12q^2 < 200\log^3 q$ and $t \log^3 t < \frac{2400q \log^3 q}{\alpha}$. Combining this inequality with our upper bound on $N$ provides the required upper bound on $\nalpha$ by taking $c = 12000c_1 + 2500$.
\begin{align*}
\nalpha &< \frac{5c_1t \log^3 t}{\log \log t} + N \\
&< \frac{12000c_1 q \log^3 q}{\alpha \log \log q} + \frac{2500q}{\alpha}\\
&< \frac{cq \log^3 q}{\alpha \log \log q} \qedhere
\end{align*}
\end{proof}

The upper bound on $\nalpha$ from Theorem \ref{nalphabound} actually turns out to be fairly tight. On the one hand, note that for a positive integer $k \le \frac{1}{3}e^{\alpha}$, we get by the classical estimate on the harmonic series $\frac{1}{1} + \frac{1}{2} + \ldots + \frac{1}{k} < 1 + \log k < \alpha$. This implies that every $\alpha$-partition needs more than $\frac{1}{3}e^{\alpha}$ distinct terms, from which we deduce $\nalpha \ge 1 + 2 + \ldots + \left \lceil \frac{1}{3}e^{\alpha} \right \rceil > \frac{1}{18}e^{2\alpha}$. Up to the $\epsilon$-term, this shows the necessity of the second term in the upper bound on $\nalpha$. \\

On the other hand, if $q$ is prime and $\alpha = \frac{p}{q}$ is bounded away from zero, then we claim that the first term is also near-optimal; up to less than a single log-factor, in fact. The proof of this latter claim is inspired by the proof of Theorem II.1 in \mbox{\cite{egypt1}}, which is itself borrowed from the proof of Theorem $1$ in \mbox{\cite{bler}}.

\begin{primesarehard}
Let $I$ be any non-degenerate interval of non-negative real numbers. Then for every large enough prime $q$ there exists a positive rational $\alpha = \frac{p}{q} \in I$ for which $\alpha \notin \setB{n}$ for all $n \le 0.3 q \log^2 q$. In particular, $\nalpha > 0.3 q \log^2 q$.
\end{primesarehard}

\begin{proof}
With $Q(x)$ defined as the number of partitions of $x$ into distinct positive integers, by Corollary $2$ in \mbox{\cite[p. 5]{part}} we have the inequality $Q(x) < e^{c \sqrt{x}}$, where $c = \frac{\pi}{\sqrt{3}} < 1.82$. In particular, for all large enough $x$ we have $\sum_{i \le x} Q(i) \le xQ(x) < e^{1.82 \sqrt{x}}$. Now let $q$ be any large enough prime such that this latter inequality holds with $x := \left \lfloor 0.3 \log^2 q \right \rfloor$. Moreover assume that $q$ is large enough so that for some $p_0 \in \mathbb{N}$, the interval $\left[\frac{p_0}{q}, \frac{p_0 + q^{0.999}}{q}\right]$ is contained in $I$. Since $\sum_{i \le x} Q(i) < e^{1.82 \sqrt{x}} < q^{0.999}$ this implies that there is a $p \in [p_0, p_0 + q^{0.999}]$ such that $p \not \equiv \sum B'^{-1} \pmod{q}$ for any set $B'$ with $\sum B' \le x$. \\

With $\alpha := \frac{p}{q}$, assume that $A$ is an $\alpha$-partition of $n$ for some $n < q^2$, and write $A$ as a disjoint union $B \cup C$ such that all $b \in B$ are divisible by $q$, while all $c \in C$ are not divisible by $q$. \\

From the inequalities $\sum B \le n < q^2$, it follows that no elements in $B$ are divisible by $q^2$, so that with $B' := \{bq^{-1} : b \in B\}$ we have $p \equiv \sum B'^{-1} \pmod{q}$. By construction of $p$, this implies $n \ge q \sum B' \ge q(x+1) > 0.3 q \log^2 q$.
\end{proof}

\section[Second bound]{Bounds on $\nalpham$} \label{uppertwo}
In this section we will find an asymptotic formula for $\nalpham$, when $m$ is large. 

\begin{explicitgraham} \label{superfinal}
For every fixed $\alpha > 0$ we have $\nalpham = \big(\frac{1}{2} + o_{\alpha}(1)\big)(e^{2\alpha} - 1)m^2$.
\end{explicitgraham}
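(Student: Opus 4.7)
The plan is to prove matching lower and upper bounds, invoking Croot's theorems \cite{cr} for the latter.

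\textbf{Lower bound.} Let $A = \{a_1 < a_2 < \cdots < a_r\}$ be any \mlarge $\alpha$-partition. Distinctness of integer parts at least $m$ forces $a_i \ge m + i - 1$, whence
$$\alpha = \sum_{i=1}^{r} \frac{1}{a_i} \le \sum_{i=1}^{r} \frac{1}{m+i-1} = \log\!\left(\frac{m+r-1}{m-1}\right) + O(1/m).$$
This gives $r \ge (e^{\alpha}-1)m - O_{\alpha}(1)$, and combined with $a_i \ge m + i - 1$,
$$\sum A \ge \sum_{i=1}^{r}(m+i-1) = rm + \binom{r}{2} \ge \tfrac{1}{2}(e^{2\alpha}-1)m^2 - O_{\alpha}(m).$$
Consequently, no integer strictly below $(\tfrac{1}{2} - o_{\alpha}(1))(e^{2\alpha}-1)m^2$ admits an \mlarge $\alpha$-partition, and so $\nalpham \ge (\tfrac{1}{2} - o_{\alpha}(1))(e^{2\alpha}-1)m^2$.

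\textbf{Upper bound.} Fix $\epsilon > 0$ and consider $n \ge (\tfrac{1}{2} + \epsilon)(e^{2\alpha}-1)m^2$ for $m$ large in terms of $\alpha$. I would construct the desired partition as $A = B \cup T$ (disjoint), where $B$ lies in a base window $[m, M]$ with $M = (1+o(1))me^{\alpha}$, and $T$ is a correction set of distinct integers in a separate window $[N,(1+o(1))N]$. The set $B$ is obtained by taking essentially all of $[m,M]$ and removing a small flexible subcollection, arranging that $\sum_{b\in B} 1/b = \alpha - \delta$ for a rational $\delta$ whose denominator is bounded in terms of $\alpha$ alone, while keeping $\sum B$ within $O(M)$ of $\tfrac{1}{2}(e^{2\alpha}-1)m^2$. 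The window $N$ is then chosen so that the leftover sum $n - \sum B$ is of order $\delta N^{2}$, compatible with a subset $T$ of $[N,(1+o(1))N]$ of the appropriate cardinality.

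\textbf{Main obstacle.} The crux is matching the two conditions $\sum 1/T = \delta$ and $\sum T = n - \sum B$ \emph{simultaneously}. This is precisely what Croot's theorems, invoked through the earlier lemmas of the paper, supply: within a sufficiently long window $[N,(1+o(1))N]$ with $N$ large, any prescribed rational reciprocal sum with denominator bounded in terms of $\alpha$ is attainable by many subsets, and local reciprocal-sum-preserving swaps (such as replacing $\{a,b\}$ by $\{a',b'\}$ with $\frac{1}{a}+\frac{1}{b} = \frac{1}{a'}+\frac{1}{b'}$ but $a+b \ne a'+b'$) provide a near-continuous range of achievable integer sums. Sliding $N$ supplies the coarser degree of freedom needed to control the scale of $\sum T$. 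The $o_{\alpha}(1)$ error in the final bound absorbs the $O(m)$ slack from the base construction together with the $o(N)$ gap of the Croot window, matching the lower bound asymptotically.
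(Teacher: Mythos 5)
Your lower bound is essentially the paper's argument (the paper just says ``a moment's thought gives $\nalpham \ge (1 - o(1))\sum I_1$'' whereas you spell it out explicitly), and it is correct.

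Your upper bound sketch, however, has a genuine gap at exactly the point you flag as the ``main obstacle,'' and the mechanism you propose for closing it would not work as stated. The problem is that you need to realize \emph{every} sufficiently large integer $n$ as $\sum A$ with $\sum A^{-1} = \alpha$, and Croot's theorems only control the reciprocal sum $\sum T^{-1}$, not the integer sum $\sum T$. Your ``sliding window $[N,(1+o(1))N]$'' and ``reciprocal-sum-preserving swaps'' give only approximate control of $\sum T$; nothing in your sketch explains how to hit all residue classes or interpolate across gaps of size $\Theta(N)$. The paper's actual device is quite different: it singles out a specific element $k \in C_1$ with $\gcd(k,210)=1$, removes it from $C_1$, and replaces it by a set $kB$ where $B$ is an ordinary $1$-partition (Graham's original theorem) of a \emph{variable} $n' \ge 78$. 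This gives $\sum (kB)^{-1} = 1/k$ (a fixed contribution, exactly replacing the removed $1/k$) while $\sum kB = kn'$ can be any multiple of $k$ beyond $78k$ --- so the coarse control you wanted is achieved by scaling $1$-partitions, not by moving a window. The remaining residues modulo $k$ are then covered by an auxiliary set $D$ whose subset sums are complete mod $k$, exploiting the identity $\tfrac{1}{21}+\tfrac{1}{28}=\tfrac{1}{20}+\tfrac{1}{30}=\tfrac{1}{12}$ so that splitting $D=D_1\cup D_2$ and taking $21D_1\cup 28D_1 \cup 20D_2\cup 30D_2$ changes the integer sum by $\sum D_2$ while leaving the reciprocal contribution $\sum(12D)^{-1}$ unchanged regardless of the split. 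You gesture at ``swaps'' of this kind but never give a concrete family, and without it the argument cannot reach every integer.

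A second, smaller error: you claim the deficit $\delta = \alpha - \sum B^{-1}$ can be made a rational with ``denominator bounded in terms of $\alpha$ alone.'' That is stronger than what Croot's Proposition 1 provides and is almost certainly false; what the proposition actually gives is a $\beta$ of order $(\log\log m)/\log m$ whose denominator is merely $m^{1/5}$-\emph{powersmooth}. The powersmoothness, not boundedness, is precisely what Croot's Proposition 2 requires in order to realize $s/t$ as $\sum C_2^{-1}$ with $C_2$ in the secondary window. Your construction should be keyed to powersmooth denominators throughout, and the auxiliary set $D$ (whose elements the paper also takes powersmooth, to keep the combined denominator under control) is needed in part to preserve this property after the residue adjustment.
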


\begin{proof}
Assume that $\alpha > 0$ and $\epsilon_0 > 0$ with $\epsilon_0 < \min(1 - e^{-\alpha}, \frac{1}{3})$ are arbitrary but fixed, let $m$ be sufficiently large in terms of $\alpha$ and $\epsilon_0$, and define $I_1, I_2$ to be the sets of integers within the intervals $(m, me^{\alpha})$ and $\big(me^{\alpha}, (1 + \epsilon_0)me^{\alpha}\big)$ respectively. Then $\sum I_1^{-1} = \alpha + o(1)$, so that a moment's thought gives $\nalpham \ge \big(1 - o(1)\big)\sum I_1 = \big(\frac{1}{2} - o(1)\big)(e^{2\alpha} - 1)m^2$. It therefore suffices to show a corresponding upper bound as well. For this, we will make crucial use of the following two results of Croot.

\begin{cr1} \label{cr1}
There exists a set $C_1 \subset I_1$ such that with $\beta := \alpha - \sum C_1^{-1}$ we have $\beta = \big(3\alpha + o(1)\big)\frac{\log \log m}{\log m}$ and $\beta$ can be written as a fraction whose denominator is $m^{1/5}$-powersmooth.
\end{cr1}

\begin{proof}
This is (a special case of) Proposition $1$ in \mbox{\cite{cr}}, with $\epsilon = \frac{1}{20}$ and $c = e^{\alpha}$.
\end{proof}

For the rest of the proof, we let $\beta$ be as in Lemma \ref{cr1}.

\begin{cr2} \label{cr2}
For every fraction $\frac{s}{t}$ with $s, t$ coprime, $\frac{1}{2}\beta < \frac{s}{t} \le \beta$ and $t$ $m^{1/5}$-powersmooth, there exists a set $C_2 \subset I_2$ such that $\sum C_2^{-1} = \frac{s}{t}$.
\end{cr2}

\begin{proof}
This is (a special case of) Proposition $2$ in \mbox{\cite{cr}}, with $\epsilon = \frac{1}{20}$ and the function $f$, for example, equal to $f(x) = \big(\alpha + o(1)\big)\log \log x$.
\end{proof}

To be able to apply these results, we furthermore need a result on the existence of powersmooth integers.

\begin{smoothinarith} \label{smoothinarith}
For every $\epsilon_0$ with $0 < \epsilon_0 < \min(1 - e^{-\alpha}, \frac{1}{3})$ there exists a $\delta > 0$, such that for any residue class $a \pmod{210}$ and all large enough $x$, there are at least $\delta x$ integers contained in the interval $\left(x, (1 + \frac{\epsilon_0}{12})x\right)$ which are all $x^{1/13}$-powersmooth and congruent to $a \pmod{210}$.
\end{smoothinarith}

\begin{proof}
See for example Theorem $1$ in \cite{smooth}. This theorem implies in particular that the $x^{1/13}$-smooth integers have positive density in every residue class. It is therefore sufficient to show that almost all $x^{1/13}$-smooth integers are actually $x^{1/13}$-powersmooth as well. To see this, note that for every prime $p$ there are less than $x^{12/13}$ integers below $x$ which are divisible by a power of $p$ larger than $x^{1/13}$. Summing this over all $p \le x^{1/13}$ gives us the upper bound $\pi(x^{1/13}) x^{12/13} = o(x)$ for the number of $x^{1/13}$-smooth integers which are not $x^{1/13}$-powersmooth.
\end{proof}

Since asymptotically almost all integers in $I_1$ are in $C_1$, by applying Lemma \ref{smoothinarith} with $x = \big(1 - \epsilon_0 \big) me^{\alpha} > m$ we deduce the existence of an integer $k \in C_1$ such that $k > \big(1 - \epsilon_0 \big) me^{\alpha}$ is coprime to $210 = 2 \cdot 3 \cdot 5 \cdot 7$ and $k+1$ is $m^{1/12}$-powersmooth. Now assume that a set $D = \{d_1, d_2, \ldots, d_l \}$ of distinct positive integers exists, with the following seven properties: 

\begin{enumerate}
	\item $\sum  D^{-1}= o\left(\frac{\log \log m}{\log m}\right)$.
	\item All $d_i \in D$ are $m^{1/6}$-powersmooth. 
	\item For every residue class $a \pmod{k}$ there exists a subset $D_2 \subseteq D$ with $\sum D_2 \equiv a \pmod{k}$.
	\item All $d_i \in D$ are larger than $k$. 
	\item None of the $d_i \in D$ are divisible by $k$. 
	\item For all $d, d' \in D$ and $x, x' \in \{20, 21, 28, 30\}$, the equality $dx = d'x'$ implies $d = d'$ and $x = x'$.
	\item $\sum D = O(\epsilon_0 m^2)$.
\end{enumerate}

With the (simplified) fraction $\frac{s}{t}$ defined as $\beta - \sum \left(12D\right)^{-1}$, we see by the first two properties of $D$ that Lemma \ref{cr2} applies to $\frac{s}{t}$. Let $C_2 \subset I_2$ therefore be such that $\sum C_2^{-1} = \frac{s}{t}$. \\

With $X$ defined as $76k + \sum I_1 + \sum I_2 + 50 \sum D$, let $n$ be any positive integer larger than $X$. Define $Y := \sum C_1 + \sum C_2 + 49 \sum D$ and let $D_2 \subseteq D$ be such that $\sum D_2 \equiv n - Y \pmod{k}$. Further define $D_1 := D \setminus D_2$ and $n' := \frac{n - Y + k - \sum D_2}{k}$, and note that $n'$ is an integer. Now let $B$ be a $1$-partition of $n'$, which exists, by the fact that $n'$ is larger than $77$; 
\begin{align*}
n'&= \frac{1}{k} \big(n - Y + k - \sum D_2\big) \\
&> \frac{1}{k} \big(X - Y + k - \sum D\big) \\
&\ge 77
\end{align*}

We now claim that $A$ is an \mlarge $\alpha$-partition of $n$, with $A$ defined as follows:
\begin{align*}
A &:= kB \cup (C_1 \setminus \{k\}) \cup C_2 \cup 21D_1 \cup 28D_1 \cup 20D_2 \cup 30D_2
\end{align*} 

To prove this, first we note that the fourth property of $D$ and the definitions of $k, C_1$ and $C_2$ imply that this partition is \mlarge. Secondly, let us check that these sets are all pairwise disjoint. \\

All elements in $(C_1 \setminus \{k\}) \cup C_2$ are smaller than $(1 + \epsilon_0)me^{\alpha}$. On the other hand, all other elements are larger than or equal to $2k$ by the fourth property of $D$, while $2k > \big(2 - 2\epsilon_0)me^{\alpha} > (1 + \epsilon_0)me^{\alpha}$. If $kb = dx$ for some $b \in B$, $d \in D$ and $x \in \{20, 21, 28, 30\}$, then $k$ needs to divide $d$, as $\gcd(k, x) = 1$ by definition of $k$. This is impossible however, by the fifth property of $D$. Finally, the sets $21D_1, 28D_1, 20D_2, 30D_2$ are all pairwise disjoint by the sixth property of $D$. \\

To see that $\sum A = n$, we calculate as follows.
\begin{align*}
\sum A &= \sum kB + \sum (C_1 \setminus \{k\}) + \sum C_2 + \sum 21D_1 \\
&\hspace{43pt}+ \sum 28D_1 + \sum 20D_2 + \sum 30D_2 \\
&= \sum kB + \sum C_1 - k + \sum C_2 + 49\sum D + \sum D_2 \\
&= kn' - k + Y + \sum D_2 \\
&= n
\end{align*}

Finally, to see $\sum A^{-1} = \alpha$, we use the equalities $\frac{1}{21} + \frac{1}{28} = \frac{1}{20} + \frac{1}{30} = \frac{1}{12}$ and $\sum (kB)^{-1} = \frac{1}{k}$. We then get $\sum A^{-1} = \sum C_1^{-1} + \sum C_2^{-1} + \sum (12D)^{-1}$. And this latter sum is equal to $\alpha - \beta + \frac{s}{t} + \sum (12D)^{-1} = \alpha$. \\

Now, the above works for all $n > X$, where $X = 76k + \sum I_1 + \sum I_2 + 50 \sum D$. Since $76k = o(m^2)$, $\sum I_2 = O(\epsilon_0 m^2)$ and (by the seventh property of $D$) $50\sum D = O(\epsilon_0 m^2)$, we get $X = \big(1 + o(1)\big)\sum I_1 = \big(\frac{1}{2} + o(1)\big)(e^{2\alpha} - 1)m^2$ for large enough $m$ by letting $\epsilon_0$ go to $0$. The only thing left to do is to create the set $D$ and prove that it has the required properties. \\

Let $e_1$ be equal to $1$ and, for $i \ge 2$, define $e_i$ to be the largest $m^{1/12}$-powersmooth integer smaller than or equal to $2e_{i-1}$. Let $l_1$ be such that $e_{l_1} \in [\frac{1}{2}\epsilon_0 k, \epsilon_0 k)$, and write $e_{l_1} = \epsilon_1 k$ for some $\epsilon_1$ with $\frac{1}{2} \epsilon_0 \le \epsilon_1 < \epsilon_0$. For $1 \le i \le l_1$ we define $d_i := e_i(k+1)$. Furthermore, define $l_2 := \left \lfloor \frac{-\log \epsilon_1}{\log \frac{4}{3}} \right \rfloor$ and, for $1 \le i \le l_2$, define the interval $J_i := (k + \left(\frac{4}{3} \right)^{i-1} \epsilon_1 k, k + \left(\frac{4}{3} \right)^i \epsilon_1 k) \subset (k, 2k)$. We then let $d_{l_1+i}$ be any $m^{1/6}$-powersmooth integer in the interval $J_i$ which is coprime to $210$. With $l := l_1 + l_2$, let us then prove that the set $D := \{d_1, \ldots, d_{l}\}$ exists and has the seven properties we need. 

\begin{existenceofd} \label{weexist}
Let $d_i$ and $e_i$ be defined as above. Then $d_i$ exists for all $i$ with $1 \le i \le l$, and $e_i > \frac{3}{2}e_{i-1}$ for all $i$ with $1 \le i \le l_1$.
\end{existenceofd}

\begin{proof}
It is clear that $e_i$ (and therefore $d_i$) exists for all $i \le l_1$ with $2^i \le m^{1/12}$, as we then simply have $e_i = 2^i$. When $i \le l_1$ is such that $2^i > m^{1/12}$, we apply Lemma \ref{smoothinarith} with $x = \frac{3}{2}e_{i-1} > \frac{3}{4}m^{1/12}$, which gives $e_i > \frac{3}{2}e_{i-1}$ for all $i \le l_1$. As for $d_{l_1+i}$ with $1 \le i \le l_2$, we apply Lemma \ref{smoothinarith} with $x = k + \left(\frac{4}{3} \right)^{i-1} \epsilon_1 k$. It is then sufficient to show $\left(x, (1 + \frac{e_0}{12})x\right) \subset J_i$. Or, equivalently, that the ratio of the endpoints of $J_i$ is at least $1 + \frac{\epsilon_0}{12}$.
\begin{align*}
\frac{k + \left(\frac{4}{3} \right)^i \epsilon_1 k}{k + \left(\frac{4}{3} \right)^{i-1} \epsilon_1 k} &= \frac{k + \left(\frac{4}{3} \right)^{i-1} \epsilon_1 k + \left(\left(\frac{4}{3} \right)^{i} - \left(\frac{4}{3} \right)^{i-1}\right)\epsilon_1 k }{k + \left(\frac{4}{3} \right)^{i-1} \epsilon_1 k} \\
&= 1 + \frac{\left(\frac{4}{3} \right)^{i-1}\left(\frac{4}{3} - 1\right)\epsilon_1 k }{k + \left(\frac{4}{3} \right)^{i-1} \epsilon_1 k} \\
&> 1 + \frac{\frac{1}{3}\epsilon_1k}{2k} \\
&\ge 1 + \frac{\epsilon_0}{12} \qedhere
\end{align*}
\end{proof}

Now that we know that all $d_i$ exist, it is clear from their definition that they are larger than $k$, not divisible by $k$ and, since $k+1$ is by definition $m^{1/12}$-powersmooth, $m^{1/6}$-powersmooth. To prove the third property that the subset sums of $D$ cover all residue classes modulo $k$, we need a well-known lemma.

\begin{completesequences} \label{complete}
If $X = \{x_1, \ldots, x_l\}$ is a set of positive integers with $1 = x_1 < \ldots < x_l$ and $x_{i} \le 2x_{i-1}$ for all $i$ with $2 \le i \le l$, then all integers in $\{1, \ldots, 2x_l-1\}$ can be written as the sum of distinct elements of $X$.
\end{completesequences}

\begin{proof}
This follows immediately by induction.
\end{proof}

For $1 \le i \le l$, define $x_i$ to be the unique positive integer smaller than or equal to $k$ with $x_i \equiv d_i \pmod{k}$, and let $X$ be the set $\{x_1, \ldots, x_l\}$. We then claim $x_1 = 1$, $x_i \le 2x_{i-1}$ for all $i$ with $2 \le i \le l$, and $x_l > \frac{1}{2}k$. From these claims we then conclude by Lemma \ref{complete} that all integers in $\{1, \ldots, k\}$ can be written as the sum of distinct elements of $X$, which implies the third property. The (in)equalities $x_1 = 1$ and $x_i \le 2x_{i-1}$ for all $i \le l_1 + 1$ follow directly from the definition of $d_i$ for $i \le l_1 + 1$. To see why $x_i \le 2x_{i-1}$ also holds for $i > l_1 + 1$, note that $x_{i} < \left(\frac{4}{3}\right)^i \epsilon_1 k = \frac{16}{9}\left(\frac{4}{3}\right)^{i-2} \epsilon_1 k < 2x_{i-1}$. Finally, $x_l > \left(\frac{4}{3} \right)^{l_2-1}\epsilon_1k > \left(\frac{4}{3} \right)^{\frac{-\log \epsilon_1}{\log \frac{4}{3}}-2}\epsilon_1k = \frac{9}{16}k > \frac{1}{2}k$. \\

By Lemma \ref{weexist} and the definition of $D$ we have $|D| = l = O(\log k)$. Since all elements in $D$ are larger than $k > m$, the first property on the size of $\sum D^{-1}$ follows. For the seventh property on the size of $\sum D$ we need another quick lemma.

\begin{sumofstuff} \label{sumofstuff}
If $X = \{x_1, \ldots, x_{l_1}\}$ is a set of positive integers with $1 = x_1 < \ldots < x_{l_1}$ and $x_i > \frac{3}{2}x_{i-1}$ for all $i$, then $\sum X < 3x_{l_1}$.
\end{sumofstuff}

\begin{proof}
This also follows immediately by induction.
\end{proof}

Applying Lemma \ref{sumofstuff} to the set $X = \{e_1, \ldots, e_{l_1}\}$ and we deduce $\sum X < 3e_{l_1} < 3\epsilon_0 k$, so that $\sum D = (k+1) \sum X + d_{l_1 + 1} + \ldots + d_{l} < 4\epsilon_0k^2 + 2l_2k = O(\epsilon_0m^2)$. \\

The final property we have to deal with is the sixth one; the non-existence of non-trivial solutions to $dx = d'x'$ with $d, d' \in D$ and $x, x' \in \{20, 21, 28, 30\}$. Since $d_i > \frac{3}{2}d_{i-1}$ for all $i \le l_1$ by Lemma \ref{weexist}, it is clear that this holds for all $d, d' \in \{d_1, \ldots, d_{l_1}\}$. On the other hand, if e.g. $d \in \{d_{l_1 + 1}, \ldots, d_l\}$, recall that we have $\gcd(d, 210) = 1$. We then see that $dx = d'x'$ implies $x = x'$, which in turn implies $d = d'$.
\end{proof}

\section[Growth rates]{Growth rates of $|\setA{n}|$ and $|\setB{n}|$} \label{growth}
Implicit in Lemma \ref{snifjes} is that \mfree partitions can be used to relate the sizes $\setA{n}$ and $\setB{n}$. In this final section we will first make this relation explicit, and then apply it in order to deduce lower and upper bounds on $|\setA{n}|$ and $|\setB{n}|$.

\begin{sninftybound} \label{sninfy}
We have the following inequality: $$ |\setB{n}| \le \min\big(|\setA{4n + 155}|, 158 \cdot |\setA{2n + 814}|\big) $$
\end{sninftybound}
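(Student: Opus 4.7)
The plan is to derive each of the two inequalities in the minimum separately by applying Lemma~\ref{snifjes} with different values of $M$. Specifically, $|\setB{n}| \le |\setA{4n + 155}|$ will come from $M = 4$ with a single fixed $\gamma$, and $|\setB{n}| \le 158 \cdot |\setA{2n + 814}|$ will come from $M = 2$ together with a family of $158$ candidate values of $\gamma$.

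For the first bound, I would fix a single rational $\gamma$ for which $4$-free $\gamma$-partitions of every integer $n' \ge 155$ exist (this existence is the content behind the constant $155$ and will need to be established, presumably via an explicit construction together with a finite verification). Given any $\beta \in \setB{n}$, Lemma~\ref{snifjes} then produces $\alpha := \beta/4 + \gamma \in \setA{4n + 155}$. Since $\gamma$ is fixed, the map $\beta \mapsto \alpha$ is injective, and $|\setB{n}| \le |\setA{4n + 155}|$ follows immediately. For the second bound, with $M = 2$, a parity obstruction now enters: if $\gamma = p/q$ in lowest terms, a $2$-free $\gamma$-partition of $n'$ has all parts odd, forcing $q$ odd and pinning the parity of $n'$ to that of $p$. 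Hence no single $\gamma$ can carry all $n' \ge N$, and the strategy is instead to select a family $\gamma_1, \dots, \gamma_{158}$ such that for every $\beta \in \setB{n}$, at least one of the shifts $\alpha_i := \beta/2 + \gamma_i$ lies in $\setA{2n + 814}$. Since $\beta = 2(\alpha_i - \gamma_i)$ is recovered from $\alpha_i$ together with the index $i$, each $\alpha \in \setA{2n + 814}$ can be hit by at most $158$ distinct values of $\beta$, yielding the claimed bound.

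The hardest part of the argument is the $M = 2$ existence step: constructing the $158$ rationals $\gamma_i$ and verifying that, for each $\beta$, some $\alpha_i$ really does belong to $\setA{2n + 814}$. The subtlety is that each $\gamma_i$ (with a fixed parity of numerator) can, via Lemma~\ref{snifjes} alone, provide $\alpha_i$-partitions of $m$ only for one parity of $m - 2n$, so some additional parity-switching device---for instance adjoining a controlled extra odd part and absorbing its reciprocal into a companion $\gamma_j$ in the family---will be needed to cover the other parity while keeping $\alpha_i$ fixed. The sharp constants $155$, $814$, and $158$ should emerge as the outputs of that combinatorial optimisation; everything else reduces to the clean double application of Lemma~\ref{snifjes} sketched above.
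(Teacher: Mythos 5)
Your handling of the first bound is essentially the paper's: apply Lemma~\ref{snifjes} with $M=4$, $\gamma=1$, $N=155$ (the required $4$-free $1$-partitions for all $n'\ge 155$ being Theorem~1 of \cite{wvd}), giving the injection $\beta\mapsto\beta/4+1$.

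For the second bound there is a genuine gap, and the route you sketch would not close it. You frame the goal correctly---find $158$ shifts $\gamma_1,\dots,\gamma_{158}$ so that for each $\beta\in\setB{n}$ some $\beta/2+\gamma_i$ lands in $\setA{2n+814}$---but your proposed mechanism (a direct application of Lemma~\ref{snifjes} with $M=2$, with parity fixed by adjoining one extra odd part) is not how the construction can go, and the obstruction you identify is also understated. The relevant building block (Corollary~5 of \cite{wvd}) produces a $2$-free $\frac{7}{15}$-partition of $m$, disjoint from $\{1,3,5\}$, only when $m\ge 729$ and $m\equiv 1\pmod{8}$, so the residue to be absorbed lives modulo $8$, not modulo $2$; a single ``extra odd part'' cannot sweep it. The paper's actual construction is structurally different from invoking Lemma~\ref{snifjes} with $M=2$: it fixes an $11$-element set $D$, splits a chosen $\beta$-partition $B$ of $n$ as $B=B_1\cup B_2$ with $B_2:=\{b\in B:2b\in D\}$, and sets $\alpha=f(\beta):=\sum(2B_1)^{-1}+\frac{22}{15}$. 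To show $\alpha\in\setA{2n+814}$, it builds $A=2B_1\cup C\cup D_i$, where $D_1,\dots,D_8\subset D$ are explicit sets satisfying $\sum D_i^{-1}=1$ and $\sum D_i\equiv i\pmod{8}$, the index $i$ is chosen so that $m:=n'-\sum 2B_1-\sum D_i\equiv 1\pmod{8}$, and $C$ is the $2$-free $\frac{7}{15}$-partition of $m$. The factor $158$ is the number of distinct values of $\sum B_2^{-1}$ over $B_2\subseteq\{1,2,3,4,9,10,12,18\}$, i.e.\ the number of candidate ``corrections'' by which $\beta$ can be recovered from $\alpha$. Your sketch lacks both the splitting $B=B_1\cup B_2$ (which is what makes the map $f$ finite-to-one rather than ill-defined) and the eight residue-covering sets $D_i$ with unit reciprocal sum (which is what makes the target $n'$ reachable); these two devices are the real content of the second bound.
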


\begin{proof}
By Theorem $1$ in \mbox{\cite{wvd}} we can apply Lemma \ref{snifjes} with $\gamma = 1$, $M = 4$ and $N = 155$. We then get a function $f$ from $\setB{n}$ to $\setA{4n + 155}$ which sends $\beta \in \setB{n}$ to $f(\beta) = \frac{\beta}{4} + 1 = \alpha \in \setA{4n + 155}$. Since $f$ is an injection, this proves the first part of the upper bound.\footnote{Lemma $1$ in \mbox{\cite{wvd}} explains why we cannot directly apply Lemma \ref{snifjes} here with $M < 4$.} \\

For the second part of the upper bound, we will define a function $f : \setB{n} \to \setA{2n + 814}$, such that for all $\alpha$ in the image of $f$, there are at most $158$ distinct $\beta \in \setB{n}$ with $f(\beta) = \alpha$. \\

For any $n \in \mathbb{N}$, let $n' \ge 2n + 814$ be a positive integer, and let the set $D = \{1, 2, 3, 4, 5, 6, 8, 18, 20, 24, 36\}$ be the union of the following sets $D_1, \ldots, D_8$:
\begin{align*}
D_{1} &:= \{1 \} \\ 
D_{2} &:= \{2, 4, 6, 18, 36 \} \\ 
D_{3} &:= \{2, 3, 6 \} \\ 
D_{4} &:= \{2, 4, 8, 18, 24, 36 \} \\ 
D_{5} &:= \{2, 3, 8, 24 \} \\ 
D_{6} &:= \{3, 4, 5, 6, 20 \} \\ 
D_{7} &:= \{2, 4, 5, 20 \} \\ 
D_{8} &:= \{3, 4, 5, 8, 20, 24 \} 
\end{align*}

For every $\beta \in \setB{n}$, choose a $\beta$-partition $B$ of $n$, and write $B$ as the disjoint union $B_1 \cup B_2$, where $b \in B_2$ if, and only if, $2b \in D$. Now define $\alpha = f(\beta) := \sum (2B_1)^{-1} + \frac{22}{15}$. For any $\alpha$ in the image of $f$, we then get that all $\beta \in \setB{n}$ with $f(\beta) = \alpha$ can be written as $2\alpha - \frac{44}{15} + \sum B_2^{-1}$, where $B_2$ is a set of integers with $2B_2 \subseteq D$. The cardinality of the inverse image $f^{-1}(\alpha)$ is therefore at most the number of distinct rationals that can be written as $\sum B_2^{-1}$ with $B_2 \subseteq \{1, 2, 3, 4, 9, 10, 12, 18 \}$. With a bruteforce search one can then check that there are exactly $158$ rationals that can be written in such a way. As we will show $\alpha \in \setB{n'}$ for all $\beta \in \setB{n}$, and $n'$ is an arbitrary integer larger than or equal to $2n + 814$, we deduce $\alpha \in \setA{2n+814}$, proving the second upper bound. \\


One can check $\sum D_i \equiv i \pmod{8}$, while $\sum D_i^{-1} = 1$ for all $i$. For a given $\beta \in \setB{n}$, let $i$ be such that $m := n' - \sum 2B_1 - \sum D_i \equiv 1 \pmod{8}$. Since $n' \ge 2n + 814$, $\sum 2B_1 \le 2n$ and $\sum D_i \le 92$, we deduce $m \ge 729$, which implies by Corollary $4$ in \mbox{\cite{wvd}} that a \free{$2$} $\frac{7}{15}$-partition $C$ of $m$ exists, with $C \cap \{1, 3, 5\} = \emptyset$. We then claim that $A = 2B_1 \cup C \cup D_i$ is an $\alpha$-partition of $n'$. \\

To see this, let us first check that $2B_1$, $C$ and $D_i$ are pairwise disjoint. The sets $2B_1$ and $D_i \subset D$ are disjoint by construction of $2B_1 = \{2b \in B | 2b \notin D \}$, while the odd integers in $2B_1 \cup D_i$ are contained in $\{1, 3, 5\}$. On the other hand, $C$ does not contain any even integers, and does not contain $1, 3$ or $5$ either. \\

Since $\sum A = \sum 2B_1 + \sum C + \sum D_i = \sum 2B_1 + m + \sum D_i = n'$ and $\sum A^{-1} = \sum (2B_1)^{-1} + \sum C^{-1} + \sum D_i^{-1} = \sum (2B_1)^{-1} + \frac{7}{15} + 1 = \alpha$, the proof is complete.
\end{proof}

In the above proof we claimed that there are $158$ different rationals that can be written as $\sum B_2^{-1}$ with $B_2 \subseteq \{1, 2, 3, 4, 9, 10, 12, 18 \}$. But even without checking all possibilities either by hand or by computer, it is clear that there are at most $2^8 = 256$ such rationals. Using this more easily verifiable upper bound would change the constant $158$ that appears in the statement of Lemma \ref{sninfy} to $256$. It seems furthermore plausible that with a different value instead of $\frac{22}{15}$ or with different sets $D_i$, this upper bound can be somewhat improved. On the other hand, it is unclear whether we should expect $|\setB{n}| \le |\setA{n+c_0}|$ for some constant $c_0$, perhaps even $c_0 = 65$. \\

In any case, we may conclude that bounds on $|\setB{n}|$ imply corresponding bounds on $|\setA{n}|$. And this will bring us to our final result.

\begin{actualsninftybound} \label{actualsninfy}
With $c = \frac{\pi}{\sqrt{3}}$ we have the following bounds: $$e^{(c - o(1))\sqrt{\frac{n}{\log n}}} < |\setA{n}| \le |\setB{n}| < e^{c\sqrt{n}}$$
\end{actualsninftybound}

\begin{proof}
Recall that $Q(n)$ was defined as the number of partitions of $n$ into distinct positive integers, and $Q(n) < e^{c\sqrt{n}}$. Since we trivially have $|\setA{n}| \le |\setB{n}| \le Q(n)$, the final two inequalities follow. As for the lower bound, with $P(N)$ the number of partitions of $n$ into distinct primes, in \mbox{\cite{primepart}} it is proven that $\log P(n)$ is asymptotically equal to $c \sqrt{\frac{2n}{\log n}}$. Now $|\setB{n}| \ge P(n)$, as different partitions into distinct primes lead to different sums of reciprocals. Applying $|\setA{n}| \ge \frac{1}{158} |\setB{\left \lfloor \frac{1}{2}n \right \rfloor - 407}|$ from Lemma \ref{sninfy} and realizing that the constants $\frac{1}{158}$ and $407$ both vanish into the $o(1)$-term, finishes the proof.
\end{proof}

\end{document}